\newcommand{\ignore}[1]{}
\newcommand{\hide}[1]{}
\DeclareMathOperator{\E}{E}
\DeclareMathOperator{\AH}{AH}
\newcommand{\F}{\mathbb F}
\newcommand{\Z}[0]{\mathbb Z}
\newcommand{\Q}{\mathbb{Q}}
\newtheorem{dummy}{Dummy}
\newtheorem{lemma}[dummy]{Lemma}
\newtheorem{theorem}[dummy]{Theorem}
\newtheorem{prop}[dummy]{Proposition}
\theoremstyle{definition}
\theoremstyle{remark}
\newtheorem{rem}[dummy]{Remark}
\newtheorem*{rem*}{Remark to ourselves}
\begin{document}

\bibliographystyle{amsalpha}

\author{Marina Avitabile}
\email{marina.avitabile@unimib.it}
\address{Dipartimento di Matematica e Applicazioni\\
  Universit\`a degli Studi di Milano - Bicocca\\
 via Cozzi 55\\
  I-20125 Milano\\
  Italy}
\author{Sandro Mattarei}
\email{smattarei@lincoln.ac.uk}
\address{Charlotte Scott Centre for Algebra\\
University of Lincoln \\
Brayford Pool
Lincoln, LN6 7TS\\
United Kingdom}

\title{The Artin-Hasse series and Laguerre polynomials modulo a prime}

\subjclass[2020]{Primary 33E50; secondary 33C45}
\keywords{Artin-Hasse series, Laguerre polynomials}

\begin{abstract}
For an odd prime $p$, let
$\E_{p}(X)=\sum_{n=0}^{\infty} a_{n}X^{n}\in\F_p[[X]]$
denote the reduction modulo $p$ of the Artin-Hasse exponential series.
It is known that there exists a series $G(X^p)\in \F_{p}[[X]]$,
such that $L_{p-1}^{(-T(X))}(X)=\E_{p}(X)\cdot G(X^p)$, where
$T(X)=\sum_{i=1}^{\infty}X^{p^{i}}$ and $L_{p-1}^{(\alpha)}(X)$ denotes the
(generalized) Laguerre polynomial of degree $p-1$.
We prove that $G(X^p)=\sum_{n=0}^{\infty}(-1)^n a_{np}X^{np}$, and show that it satisfies
$
G(X^p)\,G(-X^p)\,T(X)=X^p.
$

\end{abstract}

\date{\today}

\maketitle

\section{Introduction}\label{sec:intro}
Let $p$ be a prime. The Artin-Hasse exponential
series is the formal power series in $\Q[[X]]$ defined as
\begin{equation*}
\AH(X)=\exp\biggl(\sum_{i=0}^{\infty} X^{p^i}/p^{i}\biggr)
=\prod_{i=0}^{\infty}\exp \left(X^{p^i}/p^{i}\right).
\end{equation*}

As an immediate application of the Dieudonn\'e-Dwork criterion, its coefficients
are $p$-integral, hence they can be evaluated modulo $p$.
Let $\E_{p}(X)=\sum a_{n}X^{n}$ denote the reduction modulo $p$
of the Artin-Hasse exponential series,
hence viewed as a series in $\F_p[[X]]$.

The series $\E_{p}(X)$  satisfies a weak version of the
functional equation $\exp(X)\exp(Y)=\exp(X+Y)$ of the classical exponential
series $\exp(X)=\sum X^k/k!$ in characteristic zero.
In fact, it was shown in~\cite[Theorem 2.2]{Mat:Artin-Hasse} that each term of the series
$(\E_{p}(X+Y))^{-1}\,\E_{p}(X)\,\E_{p}(Y)$ has degree a multiple of $p$.
This weak functional equation satisfied by $\E_{p}(X)$ is the crucial property needed
for a {\em grading switching} technique developed in~\cite{Mat:Artin-Hasse}, whose goal is producing a new grading of a non-associative algebra $A$ in characteristic $p$ from a given grading.
Roughly speaking, the new grading of $A$ is obtained by applying $\E_{p}(D)$ to each homogeneous components of the given grading, where $D$ is a nilpotent derivation of $A$ satisfying a certain compatibility condition with the grading.
In full generality, that is for arbitrary derivations, the grading switching was developed in~\cite{AviMat:Laguerre}.
The {\em toral switching}  (see~\cite{Win:toral}, \cite{BlWil:rank-two} and~\cite{Premet:Cartan}), a fundamental tool in the classification theory of simple modular Lie algebras, can be recovered as a special case of it.

In this Introduction we limit ourselves to a brief survey of the definitions and the results which are essential for the purposes of this paper, referring the reader to Section~\ref{sec:prel} for further details, and the interested reader to~\cite{AviMat:Laguerre} and~\cite{AviMat:gradings} for full details.
In the general case of the grading switching, the role of the Artin-Hasse exponential is played by the (generalized) Laguerre polynomials of degree $p-1$,
\begin{equation}\label{eq:Laguerre}
L_{p-1}^{(\alpha)}(X)=\sum_{k=0}^{p-1}\binom{\alpha-1}{p-1-k}\frac{(-X)^k}{k!},
\end{equation}
regarded as polynomials in $\F_{p}[\alpha,X]$. These polynomials satisfy a congruence  which can be interpreted as a further generalization of the weak functional equation satisfied by $\E_{p}(X)$.
The main result of~\cite{Mat:exponential} then implies that the power series
\[
 S(X)=L_{p-1}^{(-\sum_{i=1}^{\infty}X^{p^{i}})}(X),
\]
in $1+X\F_{p}[[X]]$,
satisfies
$S(X)=E_{p}(X)\cdot G(X^p)$ for some series $G(X)\in 1+X\F_{p}[[X]]$.
Our main result is to determine the coefficients of the series $G(X)$ in terms of those of $\E_{p}(X)$.
We separately deal with the case $p=2$ in Remark~\ref{rem:p=2}. When $p$ is an odd prime we have the following.

\begin{theorem}\label{thm:G}
Let $p$ be an odd prime,
and let $\E_{p}(X)=\sum_{n=0}^{\infty} a_{n}X^{n}$ in $\F_{p}[[X]]$
be the reduction modulo $p$ of the Artin-Hasse exponential series. Then
\begin{equation}\label{eq:G}
L_{p-1}^{(-\sum_{i=1}^{\infty}X^{p^{i}})}(X)=\E_{p}(X)\cdot \sum_{n=0}^{\infty}(-1)^n a_{np}X^{np},
\end{equation}
and
\begin{equation}\label{eq:F}
L_{p-1}^{(-\sum_{i=1}^{\infty}X^{p^{i}})}(X)
\cdot\biggl(\sum_{i=1}^{\infty}X^{p^{i}}\biggr)
\cdot \sum_{n=0}^{\infty} a_{np}X^{np}=X^p\E_{p}(X)
\end{equation}
in $\F_{p}[[X]]$.
\end{theorem}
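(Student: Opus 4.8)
The plan is to work with the known factorization $S(X) = \E_p(X)\cdot G(X^p)$ and extract $G$ by exploiting two structural facts about $\E_p(X)$: its definition via the Artin-Hasse product, and the relation $\E_p(X)^p = \E_p(X^p)/\text{(something involving }T)$ that comes from raising the Artin-Hasse series to the $p$-th power. First I would recall the key congruence from~\cite{Mat:exponential} that produces $S(X)$, and write $S(X) = L_{p-1}^{(-T(X))}(X)$ with $T(X)=\sum_{i\ge1}X^{p^i}$. The first goal is~\eqref{eq:G}. I would try to pin down $G$ by a generating-function identity: since $\E_p(X) = \exp(X)\cdot\exp(T(X)/\text{(formal)})$ does not quite make sense mod $p$, instead I would use that $\E_p(X)^{-1}\E_p(X)^{p}$, or more precisely the Frobenius-twist relation, expresses $\E_p(X^p)$ in terms of $\E_p(X)^p$ and $T$. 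The cleanest route: differentiate. The logarithmic derivative of $\AH(X)$ is $\sum_{i\ge0}X^{p^i-1}$, so $\E_p'(X)/\E_p(X) = \sum_{i\ge0}X^{p^i-1}$ in $\F_p[[X]]$, i.e. $\E_p'(X) = (1 + T'(X)\cdot\text{stuff})$... more precisely $X\E_p'(X) = \E_p(X)\bigl(1 + \sum_{i\ge1}X^{p^i}\bigr) = \E_p(X)(1+T(X))$ after multiplying by $X$. Meanwhile the Laguerre polynomial $L_{p-1}^{(\alpha)}(X)$ satisfies a classical differential equation in $X$ (with $\alpha$ as parameter), and substituting $\alpha = -T(X)$ and applying the chain rule in $X$ (using $T'(X)=0$ in $\F_p[[X]]$ since $T'(X)=\sum p^i X^{p^i-1}\equiv 0$) should give a first-order ODE for $S(X)$. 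Comparing the ODE for $S$ with that for $\E_p$ yields an ODE for $G(X^p)$, and since $G$ is a series in $X^p$ its derivative vanishes mod $p$, which forces an \emph{algebraic} identity rather than a differential one — this is how the coefficients $(-1)^n a_{np}$ should drop out.

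Concretely, for the second step I expect the cleanest derivation of~\eqref{eq:G} to go through the substitution identity for Laguerre polynomials proved in~\cite{AviMat:Laguerre} or~\cite{Mat:exponential}, combined with the observation that $\E_p(X)\E_p(-X)$ or $\E_p(X)\cdot(\text{its }p\text{-power})$ has a closed form. Specifically, from $\E_p'/\E_p = \sum_{i\ge0}X^{p^i-1}$ one gets $\bigl(\E_p(X)\E_p(-X)\bigr)'/\bigl(\E_p(X)\E_p(-X)\bigr) = \sum_{i\ge1}\bigl(X^{p^i-1}-(-X)^{p^i-1}\bigr)$, which for odd $p$ equals $0$ (each $p^i-1$ is even, so the odd powers cancel and... wait, need care) — in any case this product is a series in $X^p$ and I would identify it explicitly. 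Then~\eqref{eq:G} follows by matching: $S(X)/\E_p(X) = G(X^p)$, and replacing $X\mapsto -X$ in the known product formula for $L_{p-1}^{(-T(X))}$ — note $T(-X)=T(X)$ for odd $p$ since all exponents $p^i$ are odd — gives $S(-X) = \E_p(-X)G(X^p)$ (\emph{not} $G(-X^p)$, because the argument of $G$ is $(-X)^p = -X^p$, so actually $S(-X)=\E_p(-X)G(-X^p)$; I must track this sign carefully). Multiplying $S(X)S(-X) = \E_p(X)\E_p(-X)G(X^p)G(-X^p)$, and separately computing $S(X)S(-X)$ and $\E_p(X)\E_p(-X)$ in closed form, should yield~\eqref{eq:F} after identifying $\E_p(X)\E_p(-X)\cdot T(X) = X^p$ — indeed the logarithmic-derivative computation above should show $\E_p(X)\E_p(-X) = $ a unit series in $X^p$ whose product with $T(X)$ collapses to $X^p$, which is essentially the content of the displayed relation $G(X^p)G(-X^p)T(X)=X^p$ from the abstract once~\eqref{eq:G} is in hand.

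To get~\eqref{eq:G} itself rather than just its symmetrized consequence, I would extract coefficients: write $S(X) = \sum_n s_n X^n$ and use that $L_{p-1}^{(-T(X))}(X)$, expanded via~\eqref{eq:Laguerre}, is a polynomial of degree $p-1$ in $X$ with coefficients that are polynomials in $T(X)$, hence series in $X^p$ times small powers of $X$. The coefficient of $X^k$ for $0\le k\le p-1$ in $S(X)$ is $\binom{-T(X)-1}{p-1-k}(-1)^k/k!$, a series in $X^p$. Then $G(X^p) = S(X)\E_p(X)^{-1}$, and I would compute $\E_p(X)^{-1} = \E_p(-X)\cdot\bigl(\E_p(X)\E_p(-X)\bigr)^{-1}$, reducing everything to the closed form for $\E_p(X)\E_p(-X)$. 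The coefficient $a_{np}$ of $G$ then comes out by reading off the $X^{np}$-coefficient, and the sign $(-1)^n$ should emerge from the $\E_p(-X)$ factor combined with the parity bookkeeping. The main obstacle I anticipate is the last one: correctly establishing the closed form $\E_p(X)\E_p(-X) = X^p/T(X)$ (equivalently $\sum_n a_{np}X^{np}\cdot\sum_n(-1)^na_{np}X^{np}\cdot T(X) = X^p$) directly from the Artin-Hasse product, since naively $\AH(X)\AH(-X) = \exp\bigl(2\sum_{i\ge1, i\ \mathrm{odd}}X^{p^i}/p^i\bigr)$ in characteristic zero does not visibly reduce to anything finite mod $p$ — the finiteness is a genuinely $p$-adic phenomenon, and I expect to need the Dieudonné–Dwork criterion or a clever telescoping in $\F_p[[X]]$, perhaps by first proving the differential relation $\bigl(\E_p(X)\E_p(-X)\bigr)' = 0$ in $\F_p[[X]]$ modulo contributions that force the product into $\F_p[[X^p]]$, and then using~\eqref{eq:G} (already proved) together with the degree-$p{-}1$ structure of $S$ to pin the product down exactly. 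Everything else — the Laguerre ODE, the logarithmic-derivative identity for $\E_p$, the parity bookkeeping — I expect to be routine once set up carefully.
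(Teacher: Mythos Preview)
Your instinct to exploit the factorization $S(X)=\E_p(X)G(X^p)$ together with the substitution $X\mapsto -X$ is exactly right, and matches the paper's strategy. However, several of the concrete computations you sketch are incorrect, and the decisive step that makes the argument work is missing.

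First, two sign errors. For odd $p$ every exponent $p^i$ is odd, so $T(-X)=-T(X)$, not $T(X)$. More importantly, $\E_p(X)\E_p(-X)=1$ identically, since in characteristic zero $\AH(X)\AH(-X)=\exp\bigl(\sum_i (X^{p^i}+(-X)^{p^i})/p^i\bigr)=\exp(0)=1$. So what you call ``the main obstacle'' dissolves; but note that $\E_p(X)\E_p(-X)$ is \emph{not} the same object as $\sum a_{np}X^{np}\cdot\sum(-1)^n a_{np}X^{np}$, and your ``equivalently'' conflates the two.

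Second, you never say how to compute $S(X)S(-X)$ in closed form. The paper uses the identity $L_{p-1}^{(\alpha)}(X)\,L_{p-1}^{(-\alpha)}(-X)\equiv 1-\alpha^{p-1}\pmod{X^p-(\alpha^p-\alpha)}$; specializing $\alpha=-T(X)$ (and noting $T(X)^p-T(X)=-X^p$ kills the modulus) gives $S(X)S(-X)=1-T(X)^{p-1}$, hence $T(X)\,S(X)\,S(-X)=T(X)-T(X)^p=X^p$.

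Third, and this is the genuine gap: even with these closed forms in hand, multiplying $S(X)S(-X)=\E_p(X)\E_p(-X)\,G(X^p)G(-X^p)$ only yields the \emph{symmetric} consequence $G(X^p)G(-X^p)\,T(X)=X^p$, not~\eqref{eq:G} itself. Your proposed route to break the symmetry (ODEs, logarithmic derivatives, coefficient extraction from $S(X)\E_p(-X)$) does not lead anywhere definite. The paper's trick is different and much cleaner: multiply $S(X)=\E_p(X)G(X^p)$ through by $T(X)\,S(-X)\,\E_p(-X)$ to obtain
\[
X^p\,\E_p(-X)=T(X)\,S(-X)\,G(X^p),
\]
and then \emph{restrict to monomials whose exponent is a multiple of $p$}. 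On the right, $T(X)$ and $G(X^p)$ already lie in $\F_p[[X^p]]$, and the $k=0$ term of $S(-X)=L_{p-1}^{(T(X))}(-X)$ is $\binom{T(X)-1}{p-1}=1-T(X)^{p-1}$; so the right-hand side becomes $X^pG(X^p)$. On the left one gets $X^p\sum_n(-1)^na_{np}X^{np}$. This gives~\eqref{eq:G} directly. The same extraction applied to $T(X)\,S(X)\,F(X^p)=T(X)\,\E_p(X)$ gives~\eqref{eq:F} in one line. Your ODE apparatus is not needed.
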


As an immediate consequence of Theorem~\ref{thm:G} we have the following result.

\begin{prop}\label{prop:X^p}
Let $p$ be an odd prime and $\E_{p}(X)=\sum_{i=0}^{\infty} a_{i}X^{i}$ in $\F_{p}[[X]]$ the reduction modulo $p$ of the Artin-Hasse exponential series.
Then in $\F_{p}[[X]]$ we have the identity
\begin{equation}\label{eq:+-}
\sum_{s=0}^{\infty} a_{sp}X^{sp}
\cdot\sum_{r=0}^{\infty} a_{rp}(-X)^{rp}
\cdot\sum_{i=1}^{\infty}X^{p^i}=X^p.
\end{equation}
\end{prop}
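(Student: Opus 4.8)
The plan is to read~\eqref{eq:+-} off directly from the two identities~\eqref{eq:G} and~\eqref{eq:F} of Theorem~\ref{thm:G}. Abbreviate $T(X)=\sum_{i=1}^{\infty}X^{p^{i}}$ and $S(X)=L_{p-1}^{(-T(X))}(X)$. Since $p$ is odd we have $(-1)^{np}=(-1)^{n}$ for every $n$, so that
\[
\sum_{n=0}^{\infty}(-1)^{n}a_{np}X^{np}=\sum_{r=0}^{\infty}a_{rp}(-X)^{rp}.
\]
Consequently~\eqref{eq:G} reads $S(X)=\E_{p}(X)\cdot\sum_{r=0}^{\infty}a_{rp}(-X)^{rp}$, while~\eqref{eq:F} reads $S(X)\cdot T(X)\cdot\sum_{s=0}^{\infty}a_{sp}X^{sp}=X^{p}\,\E_{p}(X)$.

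Substituting the first identity for $S(X)$ into the left-hand side of the second gives
\[
\E_{p}(X)\cdot\sum_{r=0}^{\infty}a_{rp}(-X)^{rp}\cdot T(X)\cdot\sum_{s=0}^{\infty}a_{sp}X^{sp}=X^{p}\,\E_{p}(X).
\]
The power series $\E_{p}(X)$ has constant term $a_{0}=1$ and is therefore a unit of $\F_{p}[[X]]$; cancelling it and reordering the (commuting) factors on the left-hand side produces exactly the identity~\eqref{eq:+-}.

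Since everything is a formal manipulation in $\F_{p}[[X]]$, there is no genuine obstacle here, which is why the statement is billed as an immediate consequence. The two points that must not be overlooked are the parity identity $(-1)^{np}=(-1)^{n}$, valid precisely because $p$ is odd, which is what lets us recognise the Artin-Hasse-type factor in~\eqref{eq:G} as the factor $\sum_{r}a_{rp}(-X)^{rp}$ appearing in~\eqref{eq:+-}; and the invertibility of $\E_{p}(X)$, immediate from $a_{0}=1$, which legitimises the cancellation.
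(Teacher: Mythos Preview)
Your argument is correct. It differs slightly from the paper's own proof: the paper restates the claim as $G(X^p)\,G(-X^p)\,T(X)=X^p$ and establishes this by re-using the intermediate identity $S(X)\,S(-X)=1-T(X)^{p-1}$ (obtained from Lemma~\ref{le:S(x)S(-X)} inside the proof of Theorem~\ref{thm:G}) together with $E_p(X)E_p(-X)=1$ and $T(X)-T(X)^p=X^p$, only afterwards identifying $G(X^p)$ via Equation~\eqref{eq:G}. You instead combine the two conclusions~\eqref{eq:G} and~\eqref{eq:F} of Theorem~\ref{thm:G} directly and cancel the unit $E_p(X)$, never touching the Laguerre lemma or the $S(X)S(-X)$ identity. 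Your route has the virtue of using only the \emph{statement} of Theorem~\ref{thm:G}, making the ``immediate consequence'' claim literally true; the paper's route, by contrast, makes the structural identity $G(X^p)G(-X^p)T(X)=X^p$ (and its independence from the explicit form of $G$) more visible.
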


Proposition~\ref{prop:X^p} can equivalently be phrased in the following form
involving the series $\sum_{p\mid k}X^k/k!$ in place of the Artin-Hasse series.

\begin{prop}\label{prop:e_p}
For any odd prime $p$, in $\Q[[X]]$ we have
\[
\sum_{s=0}^{\infty}\frac{X^{sp}}{(sp)!}
\cdot\sum_{r=0}^{\infty}\frac{(-X)^{rp}}{(rp)!}
\cdot\sum_{i=1}^{\infty}X^{p^i}\equiv X^p\pmod{p}.
\]
\end{prop}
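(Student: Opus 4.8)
The plan is to deduce Proposition~\ref{prop:e_p} from Proposition~\ref{prop:X^p} by recognising the product of its first two factors, as a single power series over $\Q$, as a product of two $p$-sections of the Artin--Hasse series $\AH(X)$.

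First I would split the linear term off the exponent, writing $\AH(X)=\exp(X)\cdot H(X)$ with $H(X)=\exp\bigl(\sum_{i=1}^{\infty}X^{p^i}/p^i\bigr)$; since every exponent $p^i$ with $i\ge 1$ is a multiple of $p$, the series $H(X)$ belongs to $\Q[[X^p]]$. Then I would invoke the elementary fact that the \emph{$p$-section} operator $\sum_n c_nX^n\mapsto\sum_{p\mid n}c_nX^n$ on $\Q[[X]]$ is linear over the subring $\Q[[X^p]]$, and apply it to $\AH(X)=\exp(X)\,H(X)$ to get the identity in $\Q[[X]]$
\[
\sum_{s=0}^{\infty}b_{sp}X^{sp}=\Bigl(\sum_{s=0}^{\infty}\frac{X^{sp}}{(sp)!}\Bigr)\cdot H(X),
\]
where $\AH(X)=\sum_n b_nX^n$, so that $b_n\in\Z_{(p)}$ and $b_n\equiv a_n\pmod p$. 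The left-hand side is a $p$-section of $\AH(X)$, hence lies in $\Z_{(p)}[[X]]$ and reduces modulo $p$ to $\sum_{s=0}^{\infty}a_{sp}X^{sp}$.

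Next I would substitute $X\mapsto -X$ in this identity and multiply the result with the original. Because $p$ is odd, $(-X)^{p^i}=-X^{p^i}$ for all $i\ge 1$, so $H(-X)=\exp\bigl(-\sum_{i=1}^{\infty}X^{p^i}/p^i\bigr)=H(X)^{-1}$; the two copies of $H$ therefore cancel exactly and I obtain, in $\Q[[X]]$,
\[
\sum_{s=0}^{\infty}\frac{X^{sp}}{(sp)!}\cdot\sum_{r=0}^{\infty}\frac{(-X)^{rp}}{(rp)!}=\Bigl(\sum_{s=0}^{\infty}b_{sp}X^{sp}\Bigr)\cdot\Bigl(\sum_{r=0}^{\infty}b_{rp}(-X)^{rp}\Bigr).
\]
In particular this left-hand product lies in $\Z_{(p)}[[X]]$, although neither of its factors does, and it reduces modulo $p$ to $\sum_{s}a_{sp}X^{sp}\cdot\sum_{r}a_{rp}(-X)^{rp}$. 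Multiplying through by $\sum_{i=1}^{\infty}X^{p^i}$, which has integer coefficients, and reducing modulo $p$, Proposition~\ref{prop:X^p} delivers exactly $X^p$, which is the assertion. (Running the same identity in the opposite direction recovers Proposition~\ref{prop:X^p} from Proposition~\ref{prop:e_p}, so the two statements are genuinely equivalent, as the text claims.)

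I do not anticipate a real obstacle. The one point demanding care is that several of the series involved --- $\exp(X)$, $H(X)$, and $\sum_s X^{sp}/(sp)!$ --- are not $p$-integral, so they must be cancelled exactly rather than inverted over $\Z_{(p)}[[X]]$. The identity $H(X)H(-X)=1$ is precisely what enables this; it fails for $p=2$, where instead $H(-X)=H(X)$, which is exactly why the even prime must be excluded and is handled separately in Remark~\ref{rem:p=2}.
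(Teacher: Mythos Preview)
Your proof is correct and is essentially the paper's own argument: your factor $H(X)=\exp\bigl(\sum_{i\ge 1}X^{p^i}/p^i\bigr)$ is exactly the paper's $\AH(X^p)^{1/p}$, and both proofs hinge on applying the $p$-section to $\AH(X)=\exp(X)\cdot H(X)$ and then cancelling $H(X)H(-X)=1$ for $p$ odd. If anything, your write-up is slightly cleaner in that you make the $\Q[[X^p]]$-linearity of the $p$-section explicit and dispense with the roots-of-unity multisection formula, which the paper mentions but does not actually use.
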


Despite its appearance, the left-hand side of the congruence of Proposition~\ref{prop:e_p}
has $p$-integral coefficients, which justifies viewing it modulo $p$.
This variant does not seem to offer any more direct proof than
deducing it from Proposition~\ref{prop:X^p}, which we will do
in Section~\ref{sec:proofs}.

As a final application of Theorem~\ref{thm:G}, we use properties of the Laguerre polynomials
to produce explicit expressions for the coefficients $a_n$, with $0\le n<p^2$, in terms
of coefficients in the same range but with $n$ multiple of $p$.
We denote by ${n \brack i}$ the (unsigned) Stirling numbers of the first kind.

\begin{prop}\label{le:a_rp+k}
For $0\leq k<p$ and $0\leq r<p$ we have
\[
a_{rp+k}=(-1)^{k+1}\sum_{j=0}^{r}{p-k \brack j+1}c_{(r-j)p},
\]
where $c_{jp}=a_{jp}$ for $0\leq j <p-1$ and $c_{(p-1)p}=a_{(p-1)p}+1$.
\end{prop}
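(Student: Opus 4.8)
The plan is to extract from Theorem~\ref{thm:G} an explicit formula for $S(X)=L_{p-1}^{(-T(X))}(X)$ as a polynomial in $X$ of degree $<p^2$, then match coefficients. First I would observe that $L_{p-1}^{(\alpha)}(X)$ is a polynomial of degree $p-1$ in $X$ whose coefficients are polynomials in $\alpha$; substituting $\alpha=-T(X)$ with $T(X)=X^p+X^{p^2}+\cdots$ and discarding all terms of degree $\geq p^2$ leaves only contributions from $X^k$ (with $0\le k<p$) multiplied by monomials in $X^p$ coming from powers of $-X^p$ (the leading term of $-T(X)$), since any factor $X^{p^2}$ already pushes the degree out of range. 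Concretely, from~\eqref{eq:Laguerre} the coefficient of $(-X)^k/k!$ is $\binom{\alpha-1}{p-1-k}$, and modulo $X^{p^2}$ one replaces $\alpha$ by $-X^p$. So I would compute $\binom{-X^p-1}{p-1-k}=\frac{(-X^p-1)(-X^p-2)\cdots(-X^p-(p-1-k))}{(p-1-k)!}$, recognize the numerator (up to sign) as $(X^p+1)(X^p+2)\cdots(X^p+(p-1-k))$, and expand it using unsigned Stirling numbers of the first kind via the standard identity $\prod_{i=1}^{m}(Y+i)=\sum_{j} {m+1 \brack j+1} Y^{j}$ — here with $m=p-1-k$ and $Y=X^p$. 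Dividing by $(p-1-k)!\equiv$ (via Wilson-type congruences) $(-1)^{k}/k!$ or similar, and combining with the $(-X)^k/k!$ factor and $1/(p-1-k)!$, the coefficient of $X^{rp+k}$ in $S(X)$ should come out to $(-1)^{k+1}{p-k \brack r+1}$ times a unit, after simplifying the factorials modulo $p$.

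Second, I would use~\eqref{eq:G}, which says $S(X)=\E_p(X)\cdot G(X^p)$ with $G(X^p)=\sum_n (-1)^n a_{np}X^{np}$. Writing $\E_p(X)=\sum_m a_m X^m$ and extracting the coefficient of $X^{rp+k}$ on the right-hand side: a term $a_m X^m \cdot (-1)^n a_{np} X^{np}$ contributes when $m+np=rp+k$, forcing $m\equiv k\pmod p$, say $m=(r-n)p+k$ with $0\le r-n$. Hence the coefficient of $X^{rp+k}$ in $S(X)$ equals $\sum_{n=0}^{r}(-1)^n a_{np}\, a_{(r-n)p+k}$. Comparing with the closed form from the first step gives a linear identity; but Proposition~\ref{le:a_rp+k} claims something cleaner, expressing $a_{rp+k}$ directly. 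To get there I would instead invert: from $S(X)=\E_p(X)G(X^p)$ we get $\E_p(X)=S(X)\cdot G(X^p)^{-1}$, and by~\eqref{eq:+-} of Proposition~\ref{prop:X^p} combined with $G(X^p)G(-X^p)T(X)=X^p$ we have $G(X^p)^{-1}=G(-X^p)\cdot T(X)/X^p=\sum_n (-1)^n a_{np}(-X^p)^n\cdot(1+X^{p^2-p}+\cdots)$; modulo $X^{p^2}$ the tail $T(X)/X^p$ is just $1$, so $G(X^p)^{-1}\equiv \sum_n a_{np}X^{np}\pmod{X^{p^2}}$ — note the signs become all positive. Therefore, modulo $X^{p^2}$,
\[
\E_p(X)\equiv S(X)\cdot\sum_{n=0}^{p-1} a_{np}X^{np}.
\]
Now extracting the coefficient of $X^{rp+k}$ (for $0\le k<p$, $0\le r<p$) on the right, using the first-step formula for the coefficients of $S(X)$, yields exactly $a_{rp+k}=(-1)^{k+1}\sum_{j=0}^r {p-k \brack j+1} a_{(r-j)p}$, with the single exception at $j=r=p-1$ absorbed into the correction term $c_{(p-1)p}=a_{(p-1)p}+1$.

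The reason for the correction term deserves care, and I expect it to be the main obstacle: the identity $\E_p(X)\equiv S(X)\,G(X^p)^{-1}$ holds exactly in $\F_p[[X]]$, but I want to truncate $G(X^p)^{-1}=G(-X^p)T(X)/X^p$ at degree $<p^2$, and while $T(X)/X^p\equiv 1\pmod{X^{p^2-p}}$ the product $G(-X^p)\cdot(T(X)/X^p)$ mixes a degree-$(p-1)p$ term of $G(-X^p)$ (namely $(-1)^{p-1}a_{(p-1)p}X^{(p-1)p}=a_{(p-1)p}X^{(p-1)p}$ since $p$ is odd) with the degree-$0$ part of $T(X)/X^p$, but also the degree-$0$ term $a_0=1$ of $G(-X^p)$ with the $X^{(p-1)p}$ term $X^{p^2-p}$ of $T(X)/X^p$ coming from $X^{p^2}$. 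This second contribution is precisely what adds $1$ to the coefficient of $X^{(p-1)p}$ in the relevant truncation, accounting for $c_{(p-1)p}=a_{(p-1)p}+1$ rather than $a_{(p-1)p}$. I would present the Stirling expansion step cleanly as a lemma on $\binom{-Y-1}{m}$ modulo standard factorial congruences, then do the two coefficient extractions, and finally verify the boundary correction by a direct degree-count at $X^{p^2-p}$.
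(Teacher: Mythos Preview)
Your approach is essentially the paper's: expand $L_{p-1}^{(-X^p)}(X)$ through rising factorials and Stirling numbers of the first kind to get coefficient $(-1)^{k+1}{p-k\brack j+1}$ for $X^{pj+k}$, then use the inverse relation $\E_p(X)=S(X)\cdot F(X^p)$ modulo $X^{p^2}$ (the paper invokes Equation~\eqref{eq:SF=E} and Equation~\eqref{eq:F} directly, whereas you reach the same $F(X^p)=G(-X^p)\,T(X)/X^p$ via Proposition~\ref{prop:X^p}) and compare coefficients. Two small slips to clean up in the write-up: your intermediate claim that $T(X)/X^p\equiv 1\pmod{X^{p^2}}$ is false---it is $1+X^{p(p-1)}$ there, which your final paragraph correctly uses---and the correction term $c_{(p-1)p}$ enters the sum at $r=p-1$, $j=0$ (so $r-j=p-1$), not at ``$j=r=p-1$''.
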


We present proofs of our results in Section~\ref{sec:proofs}.

\section{Preliminaries}\label{sec:prel}

Let $\Z_{p}$ denote the ring of $p$-adic integers, where $p$ is any prime, and write
\begin{equation}\label{eq:AH_def}
\AH(x)=\sum_{n=0}^{\infty}u_{n}X^{n} \in \Z_{p}[[X]].
\end{equation}

The coefficients $u_{n}$ satisfy the {\em recursive formula}
(see \cite[Lemma~1]{KS})
\begin{equation}\label{eq:FR}
u_{n}=\frac{1}{n}\sum_{i=0}^{\infty}u_{n-p^i},
\end{equation}
where $u_{0}=1$ and we naturally read $u_{m}=0$ for $m<0$, which easily follows
from differentiating Equation~\eqref{eq:AH_def}.

Our interest lies exclusively in prime characteristic $p$.
Denote by
$\E_{p}(X)=\sum a_{n}X^n \in\F_{p}[[X]]$
the reduction modulo $p$ of the Artin-Hasse exponential series, hence $a_n \equiv u_n$ modulo $p$.
As mentioned in the Introduction,  $\E_{p}(X)$ satisfies a functional equation which is a weak version of the fundamental equation $\exp(X)\exp(Y)=\exp(X+Y)$ for the classical exponential
series $\exp(X)=\sum X^k/k!$ in characteristic zero. Namely, as shown in the proof
of~\cite[Theorem 2.2]{Mat:Artin-Hasse}, we have
\begin{equation}\label{eq:Ep_fe}
\E_{p}(X)\E_{p}(Y)=\E_{p}(X+Y)\left(1+\sum_{i,j}a_{i,j}X^iY^j\right)
\end{equation}
in $\F_{p}[[X,Y]$, for some coefficients $a_{i,j}\in \F_{p}$ which vanish unless $p\;\vert \;i+j$.
The functional equation~\eqref{eq:Ep_fe} actually characterizes the series $\E_{p}(X)$ in $\F_{p}[[X]]$,
up to some natural variations. Precisely, we quote
from~\cite{Mat:exponential} the following
\begin{theorem}[{\cite{Mat:exponential}}]\label{thm:mat_exp}
For a series $S(X)\in 1+X\F_{p}[[X]]$, the series
\[
(S(X+Y))^{-1}\,S(X)\,S(Y)\in \F_{p}[[X,Y]]
\]
has only terms of total degree a multiple of $p$ if and only if
\[
S(X)=\E_{p}(cX)\cdot G(X^p),
\]
for some $c\in \F_{p}$ and $G(X)\in 1+X\F_{p}[[X]]$.
\end{theorem}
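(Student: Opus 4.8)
The statement is a biconditional, and I would prove the two implications separately; the ``if'' direction is routine computation, while the ``only if'' direction carries all the content.

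For the ``if'' direction, suppose $S(X)=\E_p(cX)\cdot G(X^p)$. Then
\[
\frac{S(X)\,S(Y)}{S(X+Y)}=\frac{\E_p(cX)\,\E_p(cY)}{\E_p\bigl(c(X+Y)\bigr)}\cdot\frac{G(X^p)\,G(Y^p)}{G\bigl((X+Y)^p\bigr)},
\]
and in characteristic $p$ one has $(X+Y)^p=X^p+Y^p$, so the second factor is a power series in $X^p$ and $Y^p$ and hence has all terms of total degree divisible by $p$. Substituting $cX,cY$ into the weak functional equation~\eqref{eq:Ep_fe} shows that the first factor equals $1+\sum a_{i,j}c^{i+j}X^iY^j$, whose surviving coefficients have $p\mid i+j$. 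Since the series in $\F_p[[X,Y]]$ all of whose terms have total degree divisible by $p$ form a subring, the product again lies there.

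For the ``only if'' direction, I would first record that this subring is closed under inverses of its units, and deduce that the collection $\mathcal{S}$ of series $S\in 1+X\F_p[[X]]$ satisfying the hypothesis is a group under multiplication, because $S\mapsto S(X)S(Y)/S(X+Y)$ sends products to products and inverses to inverses. By the ``if'' direction $\mathcal{S}$ contains every $\E_p(cX)$ and every $G(X^p)$. Letting $c$ be the coefficient of $X$ in $S$ and replacing $S$ by $R:=S\cdot\E_p(cX)^{-1}\in\mathcal{S}$, I reduce to the case in which the coefficient of $X$ vanishes (here using $\E_p(X)=1+X+\cdots$); it then suffices to prove that such an $R$ lies in $\F_p[[X^p]]$.

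The core is an induction killing the lowest ``bad'' coefficient. Writing $R=\sum r_m X^m$, suppose for contradiction that $n\ge 2$ is minimal with $p\nmid n$ and $r_n\neq 0$. By minimality, every nonzero term of $R$ of degree $<n$ has degree divisible by $p$, so these terms assemble into a polynomial $P(X)\in\F_p[[X^p]]$; since $\F_p[[X^p]]\subseteq\mathcal{S}$, the quotient $Q:=R/P\in\mathcal{S}$ has the form $Q=1+r_nX^n+O(X^{n+1})$ with no intervening terms. Expanding $Q(X)Q(Y)/Q(X+Y)$, its homogeneous part of degree $n$ equals $-r_n\bigl((X+Y)^n-X^n-Y^n\bigr)$, which must vanish because $Q\in\mathcal{S}$ and $p\nmid n$; as $(X+Y)^n-X^n-Y^n\neq 0$ in $\F_p[X,Y]$ precisely when $n$ is not a power of $p$ (guaranteed here by $p\nmid n$ and $n\ge 2$), we get $r_n=0$, a contradiction. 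Hence $R=G(X^p)$ and $S=\E_p(cX)G(X^p)$.

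I expect the main obstacle to be the bookkeeping of the intermediate terms of $R$ whose degree is divisible by $p$ and below $n$: these are entirely unconstrained by the hypothesis, and the clean way to dispose of them is the division by $P\in\F_p[[X^p]]$, legitimate exactly because $\mathcal{S}$ is a group containing $\F_p[[X^p]]$. The only other point needing care is the elementary fact that $(X+Y)^n-X^n-Y^n$ is nonzero modulo $p$ whenever $n$ is not a power of $p$, which follows from Kummer's theorem applied to the middle binomial coefficients $\binom{n}{j}$.
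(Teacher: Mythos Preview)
The paper does not prove this theorem: it is quoted from~\cite{Mat:exponential} and used as a black box in deriving Equation~\eqref{eq:S=EG}, so there is no in-paper argument to compare yours against.

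That said, your proof is correct and is the natural approach. The observation that $S\mapsto S(X)S(Y)/S(X+Y)$ is a group homomorphism from $1+X\F_p[[X]]$ into $1+(X,Y)\F_p[[X,Y]]$, so that $\mathcal{S}$ is a subgroup, cleanly legitimizes both the reduction by $\E_p(cX)$ and the division by the finite piece $P\in 1+X^p\F_p[X^p]$; the degree-$n$ computation for $Q$ is accurate. One cosmetic remark: Kummer's theorem is more than you need for the nonvanishing of $(X+Y)^n-X^n-Y^n$ in $\F_p[X,Y]$ when $p\nmid n$ and $n\ge 2$; simply note that the coefficient of $XY^{n-1}$ is $\binom{n}{1}=n\not\equiv 0\pmod p$.
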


The classical (generalized) Laguerre polynomial of degree $n \geq 0$ is defined as
\[
L_{n}^{(\alpha)}(X)=\sum_{k=0}^{n}\binom{\alpha+n}{n-k}\frac{(-X)^k}{k!},
\]
where $\alpha$ is a parameter, usually in the complex field. However, we may also view
$L_{n}^{(\alpha)}(X)$ as a polynomial with rational coefficients in the two indeterminates
$\alpha$ and $X$, hence in the polynomial ring $\Q[\alpha,X]$.
We are only interested in the Laguerre polynomials of degree $n=p-1$.
Their coefficients are $p$-integral, and hence can be evaluated modulo $p$.
In particular, $L_{p-1}^{(\alpha)}(X)$
will be viewed as a polynomial in $\F_p[\alpha,x]$, and as such will be given by Equation~\eqref{eq:Laguerre}.
Note that, for $\alpha=0$, $L_{p-1}^{(0)}(X)$ equals
the {\em truncated exponential} $\E(X)=\sum_{k=0}^{p-1}X^k/k!$, which in turns is congruent to
$\E_{p}(X)$ modulo $X^p$.

The Laguerre polynomials $L_{p-1}^{(\alpha)}(X)$ satisfy a congruence which can be interpreted as a further generalization of Equation~\eqref{eq:Ep_fe}. Indeed, it follows from~\cite[Proposition 2]{AviMat:Laguerre} (but see also~\cite[Theorem 1]{AviMat:gfpolylogs} for a streamlined statement) that there exist rational expressions $c_{i}(\alpha,\beta)\in \F_{p}(\alpha,\beta)$ such that
\begin{equation}\label{eq:Laguerre_fe}
L_{p-1}^{(\alpha)}(X)L_{p-1}^{(\beta)}(Y)\equiv L_{p-1}^{(\alpha+\beta)}(X+Y)\left(c_{0}(\alpha,\beta)+
\sum_{i=1}^{p-1}c_{i}(\alpha,\beta)X^iY^{p-i}\right),
\end{equation}
in $\F_{p}(\alpha,\beta)[X,Y]$, modulo the ideal generated by $X^p-(\alpha^p-\alpha)$ and
$Y^p-(\beta^p-\beta)$.
This congruence actually characterizes the polynomials $L_{p-1}^{(\alpha)}(X)$ among those in
$\F_{p}[\alpha][X]$, up to some natural variations, as proved in~\cite[Theorem 3]{AviMat:gfpolylogs}.

In the rest of the paper we let $S(X)$ denote the power series in $1+X\F_{p}[[X]]$ defined as
\[
 S(X)=L_{p-1}^{(-\sum_{i=1}^{\infty}X^{p^{i}})}(X).
\]
According to~\cite[Proposition 6]{AviMat:gradings} to which we refer for details, Equation~\eqref{eq:Laguerre_fe} implies that $(S(X+Y))^{-1}S(X)S(Y)$ has only terms of
degree divisible by $p$. According to Theorem~\ref{thm:mat_exp}, since $S(X)\equiv
L_{p-1}^{(0)}(X)=\E(X)\equiv \E_{p}(X)$ modulo $X^p$,
we have
\begin{equation}\label{eq:S=EG}
S(X)=\E_{p}(X)\cdot G(X^{p})
\end{equation}
for some $G(X)$ in $1+X\F_{p}[[X]]$. Equivalently, we have
\begin{equation}\label{eq:SF=E}
S(X)\cdot F(X^p)=\E_{p}(X),
\end{equation}
for some $F(X)=1/G(X)$ in $1+X\F_{p}[[X]]$.
Our Theorem~\ref{thm:G} produces explicit expressions for $G(X^p)$ and $F(X^p)$.

\section{Proofs}\label{sec:proofs}

In this section we prove Theorem~\ref{thm:G}, Proposition~\ref{prop:X^p} and Proposition~\ref{le:a_rp+k}.
We will need the following special instance of  Equation~\eqref{eq:Laguerre_fe}.

\begin{lemma}[{\cite[Lemma 10]{AviMat:glog}}]\label{le:S(x)S(-X)}
In the polynomial ring $\F_{p}[\alpha,X]$ we have
\[
L_{p-1}^{(\alpha)}(X)\cdot L_{p-1}^{(-\alpha)}(-X)\equiv 1-\alpha^{p-1} \pmod {X^p-(\alpha^p-\alpha)}.
\]
\end{lemma}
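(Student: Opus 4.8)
The plan is to prove Lemma~\ref{le:S(x)S(-X)} by specializing the general functional equation~\eqref{eq:Laguerre_fe} to the case $\beta=-\alpha$ and $Y=-X$, and then reading off the coefficients $c_i(\alpha,-\alpha)$ explicitly from the structure of the Laguerre polynomials. First I would substitute $\beta=-\alpha$ in~\eqref{eq:Laguerre_fe}, which turns the right-hand factor $L_{p-1}^{(\alpha+\beta)}(X+Y)$ into $L_{p-1}^{(0)}(X+Y)=\E(X+Y)$. Then setting $Y=-X$ makes this factor $\E(0)=1$, so the right-hand side collapses to $c_0(\alpha,-\alpha)+\sum_{i=1}^{p-1}c_i(\alpha,-\alpha)X^i(-X)^{p-i}$, a polynomial in $X^p$ modulo the stated ideal (using that $X^p\equiv\alpha^p-\alpha$ and $Y^p\equiv\beta^p-\beta=-(\alpha^p-\alpha)$ are both constrained). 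The left-hand side becomes exactly $L_{p-1}^{(\alpha)}(X)\cdot L_{p-1}^{(-\alpha)}(-X)$, which is the quantity we want to identify.

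The substantive step is then to determine the value of the resulting expression modulo $X^p-(\alpha^p-\alpha)$, i.e.\ to show it equals $1-\alpha^{p-1}$. I would compute the product $L_{p-1}^{(\alpha)}(X)\cdot L_{p-1}^{(-\alpha)}(-X)$ modulo $X^p-(\alpha^p-\alpha)$ directly. One clean route uses the known closed form for $L_{p-1}^{(\alpha)}(X)$ modulo $X^p$: from~\eqref{eq:Laguerre} one has $\binom{\alpha-1}{p-1-k}=\prod_{j=k+1}^{p-1}\frac{\alpha-1-(p-1-j)}{\cdots}$, and there is a standard identity expressing $L_{p-1}^{(\alpha)}(X)$ as a Gauss-type hypergeometric truncation; in characteristic $p$ one knows (this is essentially the content behind~\cite{AviMat:glog}) that $L_{p-1}^{(\alpha)}(X)\equiv(1-X/\alpha)^{-?}\cdots$ type expressions hold after reduction by $X^p-(\alpha^p-\alpha)$. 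Alternatively, and more safely, I would invoke the characterization: the product $P(\alpha,X):=L_{p-1}^{(\alpha)}(X)L_{p-1}^{(-\alpha)}(-X)$ reduced modulo $X^p-(\alpha^p-\alpha)$ is a polynomial in $\alpha$ alone (since $X^p$ is being identified with a function of $\alpha$, and the expression is visibly a polynomial in $X^p$ by the parity argument above), of degree at most $p-1$ in $\alpha$; one then pins it down by checking enough specializations of $\alpha$ in $\F_p$, where $\alpha^p-\alpha=0$ forces $X^p\equiv0$ and the product reduces to the constant term of $L_{p-1}^{(\alpha)}(X)L_{p-1}^{(-\alpha)}(-X)$ modulo $X^p$, namely $\binom{\alpha-1}{p-1}\binom{-\alpha-1}{p-1}$, which one computes to be $1$ when $\alpha\not\equiv0$ and... — wait, this needs care, so in practice I would follow the computation in~\cite[Lemma 10]{AviMat:glog} verbatim.

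The main obstacle is controlling the coefficients $c_i(\alpha,-\alpha)$: a priori these are only rational expressions in $\F_p(\alpha,\beta)$, so specializing $\beta=-\alpha$ could hit a pole, and one must check that the relevant $c_i$ stay regular there (or rather, that the combination $\sum c_i(\alpha,-\alpha)X^i(-X)^{p-i}$ does, after using the ideal relations). The cleanest way around this is to avoid~\eqref{eq:Laguerre_fe} altogether for the proof and instead do the polynomial computation of $L_{p-1}^{(\alpha)}(X)L_{p-1}^{(-\alpha)}(-X)\bmod (X^p-(\alpha^p-\alpha))$ head-on, using the three-term structure: note $L_{p-1}^{(-\alpha)}(-X)$ has coefficients $\binom{-\alpha-1}{p-1-k}X^k/k!=(-1)^{p-1-k}\binom{\alpha+k}{p-1-k}X^k/k!$, multiply the two degree-$(p-1)$ polynomials to get a degree-$(2p-2)$ polynomial, reduce the top half using $X^p\equiv\alpha^p-\alpha$, and collect. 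Since the product is even in the sense that replacing $(X,\alpha)$ by $(-X,-\alpha)$ fixes it, and the reduction is compatible with this symmetry, the answer must be a polynomial in $X^p+(\text{function of }\alpha)$; combined with the degree bound this leaves only finitely many monomials to evaluate, and matching against $1-\alpha^{p-1}$ reduces to a handful of binomial-coefficient identities modulo $p$ (Wilson's theorem and $\binom{p-1}{k}\equiv(-1)^k$). I expect the bookkeeping in this collection-and-reduction step to be the only real work.
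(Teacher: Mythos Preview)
The paper does not prove this lemma at all: it is quoted verbatim from~\cite[Lemma~10]{AviMat:glog} and then simply used. So there is no ``paper's own proof'' to compare your attempt against.

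That said, your sketch is not yet a proof, and the gap is concrete. Both routes you describe hinge on knowing \emph{in advance} that the remainder of $L_{p-1}^{(\alpha)}(X)\,L_{p-1}^{(-\alpha)}(-X)$ modulo $X^p-(\alpha^p-\alpha)$ is independent of $X$. Via the functional equation~\eqref{eq:Laguerre_fe} this would indeed follow, since $X^iY^{p-i}\mapsto(-1)^{p-i}X^p\equiv(-1)^{p-i}(\alpha^p-\alpha)$, but only if the rational functions $c_i(\alpha,\beta)$ are regular along $\beta=-\alpha$; you flag this and do not resolve it. In your ``direct'' route you claim the product is ``visibly a polynomial in $X^p$ by the parity argument'': this is false. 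The involution $(X,\alpha)\mapsto(-X,-\alpha)$ fixes the product and (up to a sign, for $p$ odd) the modulus, but invariance under it only forces the surviving monomials $X^a\alpha^b$ to satisfy $a+b$ even; it does not force $a=0$, and so does not make the remainder a polynomial in $\alpha$ alone.

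Because of this, your specialization step is circular. For $\alpha=a\in\F_p$ the modulus becomes $X^p$, and the reduced product is the full degree-$<p$ truncation of $L_{p-1}^{(a)}(X)\,L_{p-1}^{(-a)}(-X)$, not just its constant term; you may set $X=0$ to read off the constant only after you know the other coefficients vanish. (Equivalently, for $a\neq 0$ you would need the nontrivial fact that $X^p\mid L_{p-1}^{(a)}(X)\,L_{p-1}^{(-a)}(-X)$ in $\F_p[X]$.) Your final sentence, ``in practice I would follow the computation in~\cite[Lemma~10]{AviMat:glog} verbatim,'' is honest but amounts to citing the result rather than proving it --- which, to be fair, is exactly what the present paper does.
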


Note that $L_{p-1}^{(\alpha)}(0)=\binom{\alpha-1}{p-1}=1-\alpha^{p-1}=L_{p-1}^{(-\alpha)}(0)$.

\begin{proof}[Proof of Theorem~\ref{thm:G}]
From Equation~\eqref{eq:S=EG}
and the fact that
$\E_{p}(X)\E_{p}(-X)=1$ (for $p$ odd)
we deduce
\[
T(X)\cdot S(X)\cdot S(-X)\cdot \E_{p}(-X)
=T(X)\cdot S(-X)\cdot G(X^p),
\]
where $T(X)=\sum_{i=1}^{\infty}X^{p^i}$.
To fix notation we set $G(X^p)=\sum_{n=0}^{\infty}b_{np}X^{np}$.
Setting $\alpha=-T(X)$
in Lemma~\ref{le:S(x)S(-X)} we find
$
S(X)\cdot S(-X)
=1-T(X)^{p-1}
$.
In more formal terms
we have applied to the congruence the ring homomorphism
of $\F_{p}[\alpha,X]$ to $\F_p[[X]]$ which maps $\alpha$ to $-T(X)$,
noting that the modulus
$X-(\alpha^p-\alpha)$
belongs to its kernel.
Consequently,
$T(X)\cdot S(X)\cdot S(-X)=X^p$,
because $T(X)-T(X)^p=X^p$, and hence
\[
X^p\cdot
\sum_{n=0}^{\infty}(-1)^n a_{n}X^{n}
=T(X)\cdot S(-X)\cdot
\sum_{n=0}^{\infty}b_{np}X^{np}.
\]
Now we are only interested in the terms of this equation where the exponent of $X$ is a multiple of $p$.
In the case of $S(-X)=L_{p-1}^{(T(X))}(-X)$ the collection of such terms equals
\[
\binom{T(X)-1}{p-1}=1-T(X)^{p-1}
\]
in $\F_p(X)$.
Because $T(X)-T(X)^p= X^p$ we conclude
\[
X^p \sum_{n=0}^{\infty}(-1)^n a_{np}X^{np}= X^p \sum_{n=0}^{\infty} b_{np}X^{np},
\]
which is equivalent to Equation~\eqref{eq:G}.

To prove Equation~\eqref{eq:F} we proceed in a similar way, starting from
$S(X)\cdot F(X^p)=\E_{p}(X)$ in $\F_{p}[[X]]$.
Setting $F(X^p)=\sum_{n=0}^{\infty}c_{np}X^{np}$
we have
\[
T(X)\cdot S(X)\cdot \sum_{n=0}^{\infty}c_{np}X^{np} =
T(X)\cdot \sum_{n=0}^{\infty}a_{n}X^{n}.
\]
Restricting to powers of $X$ with exponent a multiple of $p$ in each side we obtain
\[
X^p \cdot \sum_{n=0}^{\infty}c_{np}X^{np} = T(X)\cdot \sum_{n=0}^{\infty}a_{np}X^{np},
\]
which is equivalent to Equation~\eqref{eq:F}.
\end{proof}

\begin{rem}\label{rem:p=2}
Although Theorem~\ref{thm:G} does not extend to $p=2$ as stated, a replacement for Equation~\eqref{eq:F}
is easily found directly.
Indeed, $L_{1}^{(\alpha)}(X)=1+\alpha+X$ and the recursive formula Equation~\eqref{eq:FR} implies
$
a_{2n}=a_{2n+1}+\sum_{i=1}^{\infty}a_{2n+1-2^{i}}
$
for every integer $n$, where $a_{n}=0$ for $n<0$. Hence,
\begin{align*}
\E_{2}(X)&=\sum_{n=0}^{\infty}a_{2n}X^{2n}+\sum_{n=0}^{\infty}a_{2n+1}X^{2n+1}=\\
&=(1+X)\sum_{n=0}^{\infty}a_{2n+1}X^{2n}+\sum_{n=0}^{\infty}\left(\sum_{i=1}^{\infty}a_{2n+1-2^{i}}\right)X^{2n}=\\
&=(1+X+\sum_{i=1}^{\infty}X^{2^{i}})\sum_{n=0}^{\infty}a_{2n+1}X^{2n}=S(X)\sum_{n=0}^{\infty}a_{2n+1}X^{2n}.
\end{align*}
Thus, when $p=2$ Equation~\eqref{eq:SF=E} holds with $F(X^2)=\sum_{n=0}^{\infty}a_{2n+1}X^{2n}$.
\end{rem}

Theorem~\ref{thm:G} immediately implies Proposition~\ref{prop:X^p}.
\begin{proof}[Proof of Proposition~\ref{prop:X^p}]
Our goal can be restated as
\[
G(X^p)\,G(-X^p)\,T(X)=X^p.
\]
Now $G(X^p)\,G(-X^p)=S(X)\,S(-X)=1-T(X)^{p-1}$,
as we deduced from Lemma~\ref{le:S(x)S(-X)} at the beginning of the proof of Theorem~\ref{thm:G}.
The conclusion follows because $G(X^p)=\sum_{r=0}^{\infty}(-1)^ra_{rp}X^{rp}$ according to Theorem~\ref{thm:G}.
\end{proof}

Deducing Proposition~\ref{prop:e_p} from Proposition~\ref{prop:X^p}
requires the technique of series multisection.

\begin{proof}[Proof of Proposition~\ref{prop:e_p}]
In terms of
$e_p(X)
=\sum_{p\mid k}X^k/k!$,
our goal becomes the congruence
\[
e_p(X)\,e_p(-X)\,\sum_{i=1}^{\infty}X^{p^i}
\equiv X^p\pmod{p}
\]
from the equation
\[
G(X^p)\,G(-X^p)\,T(X)=X^p
\]
in $\F_p[[X]]$.
We have
$
e_p(X)
=(1/p)\sum_{\omega^p=1}\exp(\omega X),
$
where the sum is over all complex $p$th roots of unity $\omega$.

Because of the equation
\[
\AH(X)
=\prod_{i=0}^{\infty}\exp \left(X^{p^i}/p^{i}\right)
=\exp(X)\AH(X^p)^{1/p},
\]
our series $G(-X^p)$
equals the reduction modulo $p$ of
$e_p(X)\AH(X^p)^{1/p}$.
Consequently, for $p$ odd, the product
$G(X^p)\,G(-X^p)$ equals the reduction modulo $p$ of
\[
e_p(X)\AH(X^p)^{1/p}\cdot e_p(-X)\AH(-X^p)^{1/p},
\]
which simplifies to
$e_p(X)\,e_p(-X)$.
Note that $e_p(X)\AH(X^p)^{1/p}$
belongs to $\Z_p[[X]]$
because
$\AH(X)$ does.
Hence so does $e_p(X)\,e_p(-X)$.
\end{proof}

Denote by $y^{\overline{n}}=y(y+1)\cdots (y+n-1)$ the {\em rising factorial}.
The (unsigned) Stirling number of the first kind ${n \brack i}$,
for $0\leq i \leq n$, may be defined by the polynomial identity $y^{\overline{n}}=\sum_{i=0}^{n} {n \brack i} y^i$
in $\Z[y]$.

\begin{proof}[Proof of Proposition~\ref{le:a_rp+k}]
In view of working modulo $X^{p^2}$,
because
$S(X)$ is congruent with $L_{p-1}^{(-X^{p})}(X)$,
we expand the latter as
\begin{align*}
&L_{p-1}^{(-X^{p})}(X)=\sum_{k=0}^{p-1}\binom{X^p-(k+1)}{p-1-k}\frac{X^k}{k!}=
\sum_{k=0}^{p-1}(-1)^{k+1}(X^p+1)^{\overline{p-1-k}}X^k \\
&=\sum_{k=0}^{p-1}\sum_{i=0}^{p-1-k}{p-1-k \brack i} (X^p+1)^{i}(-1)^{k+1}X^k\\
&=\sum_{k=0}^{p-1}\sum_{j=0}^{\infty}(-1)^{k+1}\left(\sum_{i=j}^{p-1-k}{p-1-k \brack i}\binom{i}{j}\right) X^{pj+k}
\\&=
\sum_{k=0}^{p-1}\sum_{j=0}^{\infty}(-1)^{k+1}{p-k \brack j+1} X^{pj+k},
\end{align*}
where we have used the standard identity $\sum_{t=m}^{n}{n \brack t}\binom{t}{m}={n+1 \brack m+1}$.

Because of Equation~\eqref{eq:SF=E} we have
$L_{p-1}^{(-X^{p})}(X)F(X^p)\equiv \E_{p}(X)\pmod{X^{p^2}}$, where $F(X^p)=\sum_{r=0}^{\infty}c_{rp}X^{rp}$ for some $c_{np}\in \F_{p}$.
Comparing this with
\[
L_{p-1}^{(-X^{p})}(X)F(X^p)\equiv \sum_{k=0}^{p-1}\sum_{r=0}^{p-1}\sum_{j=0}^{r}(-1)^{k+1}{p-k \brack j+1}c_{(r-j)p}X^{rp+k}
\pmod{X^{p^{2}}}
\]
completes the proof.
Note that the equation in the statement implicitly includes a definition of the
coefficients $c_{jp}$ when $k=0$.
\end{proof}

\begin{rem}
The coefficients $u_n\in\Q$ of the Artin-Hasse series may be computed recursively
from Equation~\eqref{eq:FR}.
When $n$ is not a multiple of $p$, the recursive equation may be read modulo $p$, and hence applied directly to the coefficients $a_n$.
Writing $n=rp+k$, with $0\le k<p$, a recursive application of Equation~\eqref{eq:FR} shows that
$a_{rp+k}$ may eventually be computed from the coefficients $a_{ip}$ for $i<r$.
Proposition~\ref{le:a_rp+k} provides an explicit form for the final result of that process.
\end{rem}

\bibliography{References}

\end{document}